\def\thm@space@setup{
  \thm@preskip=4mm
  \thm@postskip=0mm
}
\theoremstyle{plain} %% default 
\newtheorem{theorem}{Theorem}
\newtheorem{lemma}[theorem]{Lemma}
\newtheorem{obs}[theorem]{Observation}
\newtheorem{question}{Question} 
\newtheorem{metaquestion}[question]{Meta Question} 
\newtheorem{openquestion}[question]{Open Question} 
\crefname{metaquestion}{Question}{Questions}
\Crefname{openquestion}{Question}{Questions}
\newcommand{\calC}{\mathcal{C}}
\newcommand{\calH}{\mathcal{H}}
\newcommand{\q}[1]{``#1''}
\let\leq\leqslant
\let\geq\geqslant
\let\epsilon\varepsilon
\let\setminus\smallsetminus
\newcommand{\para}[1]{\mathbf{#1}}
\DeclareMathOperator\tw{\para{tw}}
\DeclareMathOperator\pw{\para{pw}}
\DeclareMathOperator\td{\para{td}}
\DeclareMathOperator\dist{dist}
\newcommand{\Apex}{\mathbb{A}}  
\DeclareMathOperator\Inc{Inc}
\renewenvironment{enumerate}{\begin{enumorig}[label=\textup{(\roman*)}, noitemsep, 
topsep=2pt plus 2pt, labelindent=.2em, leftmargin=*, widest=iii]}{\end{enumorig}}
\newcommand{\defin}[1]{\emph{\textcolor{ForestGreen}{#1}}}
\begin{document} 
\title[Counterexamples to Statements on Isometric Graph Coverings] 
{Counterexamples to Statements on Isometric Graph Coverings}

\email{\href{mailto:paul.bastide@ens-rennes.fr}{paul.bastide@ens-rennes.fr}}
\email{\href{mailto:julien.duron@ens-lyon.fr}{julien.duron@ens-lyon.fr}}
\email{\href{mailto:jedrzej.hodor@gmail.com}{jedrzej.hodor@gmail.com}}
\email{\href{mailto:wcliu@sdu.edu.cn}{wcliu@sdu.edu.cn}}
\email{\href{mailto:xiangxiangnie@sdu.edu.cn}{xiangxiangnie@sdu.edu.cn}}

\author[Bastide]{Paul Bastide}
\address[Bastide]{Mathematical Institute, University of Oxford, Oxford, United Kingdom}

\author[Duron]{Julien Duron}
\address[Duron]{Institute of Informatics, University of Warsaw, Poland}

\author[Hodor]{Jędrzej Hodor}
\address[Hodor]{Theoretical Computer Science Department, 
Faculty of Mathematics and Computer Science and  Doctoral School of Exact and Natural Sciences, Jagiellonian University, Krak\'ow, Poland}

\thanks{P. Bastide is supported by ERC Advanced Grant
883810. 
J. Duron is supported by the project BOBR that
has received funding from the European Research Council (ERC) under
the European Union’s Horizon 2020 research and innovation programme
(grant agreement No 948057).
J.\ Hodor is supported by a Polish Ministry of Education and Science grant (Perły Nauki; PN/01/0265/2022).
W. Liu is supported by the Postdoctoral Fellowship Program of CPSF under Grant Number GZC20252020.}

\author[Liu]{Weichan Liu}
\address[Liu]{School of Mathematics, Shandong University, Jinan, China}

\author[Nie]{Xiangxiang Nie}
\address[Nie]{Data Science Institute, Shandong University, Jinan, China}

%\keywords{Poset, dimension, local dimension, Boolean lattice}

\begin{abstract}
    A connected subgraph of a graph is isometric if it preserves distances.
    In this short note, we provide counterexamples to several variants of the following general question: When a graph $G$ is edge covered by connected isometric subgraphs $H_1,\dots,H_k$, which properties of $G$ can we infer from properties of $H_1,\dots,H_k$?
    For example, Dumas, Foucaud, Perez and Todinca (SIDMA, 2024) proved that when $H_1,\dots,H_k$ are paths, then the pathwidth of $G$ is bounded in terms of $k$.
    Among others, we show that there are graphs of arbitrarily large treewidth that can be isometrically edge covered by four trees.
\end{abstract}

 \begin{textblock}{20}(-1.75, 10.6)
 \includegraphics[width=40px]{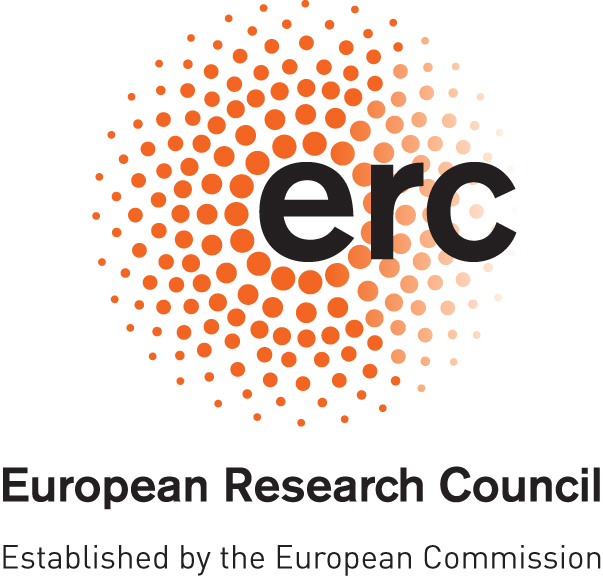}%
 \end{textblock}
 \begin{textblock}{20}(-1.75, 11.6)
 \includegraphics[width=40px]{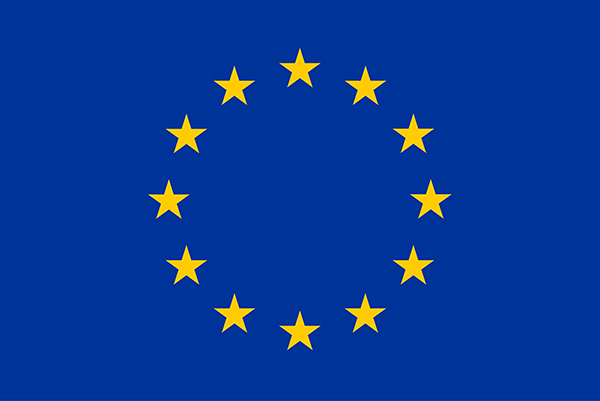}%
 \end{textblock}

\maketitle

%\paul{Counterexamples to Statements on Isometric Graph Coverings \\
%Structural limitations of covering with isometric graphs
%}

\section{Introduction}\label{sec:introduction}
The \defin{distance} of two vertices $u$ and $v$ in a connected graph $G$ is the minimum length of a path connecting $u$ and $v$, i.e.\ its number of edges -- we denote this value by \defin{$\dist_G(u,v)$}.
A connected subgraph $H$ of a graph $G$ is \defin{isometric} if it preserves distances, namely, for all two vertices $u$ and $v$ in $H$, we have $\dist_H(u,v) = \dist_G(u,v)$.
We say that a set of subgraphs $\calH$ of a graph $G$ \defin{edge covers} $G$ when $E(G) = \bigcup_{H \in \calH} E(H)$.
A general question that we are interested in is the following.

\begin{metaquestion}\label{main-question}
Given a positive integer $k$ assume that a graph $G$ is edge covered by connected isometric subgraphs $H_1,\dots,H_k$.
Assuming \q{some} property of $H_1,\dots,H_k$, can we infer \q{some} (potentially different) property of $G$?
\end{metaquestion}

In our consideration, \q{some} property will be a bound on one of the classical structural graph parameters.
For a graph $G$, the \defin{treewidth}, \defin{pathwidth}, and \defin{treedepth} of $G$ are respectively denoted by \defin{$\tw(G)$}, \defin{$\pw(G)$}, and \defin{$\td(G)$}.
See~\cref{sec:detailed} for the definitions.
The goal of this note is to provide counterexamples for many seemingly \q{natural} statements as asserted in~\Cref{main-question}.

\Cref{main-question} is motivated by a result of Dumas, Foucaud, Perez and Todinca~\cite{Dumas}.
They proved that there exists a function $f$ such that if a graph $G$ can be edge covered by $k$ isometric paths, then $\pw(G) \leq f(k)$.
% Note that if a family of subgraphs of a graph $G$ edge covers $G$, then it also vertex covers $G$.
% Thus, the for the same function $f$ it holds that if a graph $G$ can be edge covered by $k$ isometric paths, then $\pw(G) \leq f(k)$.
Baste, De Meyer, Giocanti, Objois, and Picavet~\cite{Baste} latter improved the bound on $f$ from exponential to polynomial.
They also asked if an analogous result holds for trees and treewidth.
\begin{question}\cite{Baste}\label{question-trees-treewidth}
Does there exists a function $g$ such that if a graph $G$ can be edge covered by $k$ isometric trees, then $\tw(G) \leq g(k)$?
\end{question}
As evidence, Baste et al.\ proved that this holds for $k=2$ in a strong sense: if a graph $G$ can be edge covered by two isometric trees, then $\tw(G) \leq 2$.
% We remark that the answer to~\Cref{question-trees-treewidth} with \q{edge covered} replaced by \q{vertex covered} is negative as witnessed by a simple construction with $k=2$ citeTODO. 
A natural generalization of~\Cref{question-trees-treewidth} is the following.

\begin{question}\label{question-parameters}
Let $\para{p},\para{q} \in \{\tw,\pw,\td\}$. Does there exist a function $h$ such that for every positive integer $c$ if a graph $G$ can be edge covered by $k$ connected isometric subgraphs with the parameter $\para{p}$ at most $c$, then $\para{q}(G) \leq h(k,c)$?
\end{question}

Recall that for every graph $H$, we have $\tw(H) \leq \pw(H) \leq \td(H)-1$.
Therefore, the weakest variant of~\Cref{question-parameters} is when $\para{p} = \td$ and $\para{q} = \tw$.
We answer~\Cref{question-trees-treewidth} and every possible variant of~\Cref{question-parameters} in negative.
Namely, we prove the following.

\begin{theorem}\label{thm:main-intro}
For every positive integer $n$ and for every $k \in \{2,3,4\}$, there exist connected graphs $H_1,\dots,H_k$ such that
\begin{enumerate}
	\item if $k=4$, then $H_1$, $H_2$, $H_3$, $H_4$ are trees with $\td(H_i) = 3$ for each $i \in [4]$;  \label{item:main:k=4}
	\item if $k=3$, then $H_1$, $H_2$ are trees with $\td(H_1) = \td(H_2) = 3$, and $\pw(H_3) = 2$, $\td(H_3) = 4$;\label{item:main:k=3}
	\item if $k=2$, then $\pw(H_1) = \pw(H_2) = 2$, and $\td(H_1) = 3$, $\td(H_2) = 4$; \label{item:main:k=2}
\end{enumerate}

and there exists a graph $G_n$ with $\tw(G_n) \geq n$ such that each of $H_1,\dots,H_k$ is an isometric subgraph of $G_n$ and $H_1,\ldots,H_k$ edge cover $G_n$.
\end{theorem}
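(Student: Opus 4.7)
The plan is to construct, for each $k \in \{2,3,4\}$, an infinite family $(G_n)_{n \in \N}$ of graphs with $\tw(G_n) \to \infty$, together with isometric subgraphs $H_1, \ldots, H_k$ that edge-cover $G_n$ and satisfy the stated treedepth/pathwidth constraints. I would focus on the $k=4$ case first and derive the $k=3$ and $k=2$ cases by merging.

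A tree of treedepth $3$ is built from a single ``center'' vertex $c$ by attaching disjoint legs of length at most $3$ (equivalently, the components of $T - c$ are stars), so it has diameter at most $6$. Being isometric in $G_n$, each $H_i$ has $V(H_i) \subseteq B_{G_n}(c_i, 3)$ for its center $c_i$. In particular, covering a large-treewidth $G_n$ by a bounded number of such subgraphs forces $G_n$ to have bounded diameter. I would therefore seek $G_n$ as a bipartite or multipartite graph of small diameter endowed with a natural four-fold symmetry, whose edge set partitions into four classes each packageable as an isometric treedepth-$3$ tree sharing a common center. Natural candidates include subdivisions of complete or complete bipartite graphs, incidence graphs of combinatorial designs, or a tailor-made construction designed precisely to admit such a cover while producing a large clique or grid minor.

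For $k=3$, I would combine two of the four trees into a single isometric subgraph of pathwidth $2$ and treedepth $4$, leveraging that trees of treedepth $3$ already have pathwidth at most $2$ and that a carefully chosen union of two such trees can retain pathwidth $2$. For $k=2$, merging in pairs should give two pathwidth-$2$ subgraphs, of treedepths $3$ and $4$ respectively. Each merge requires re-verifying the isometric property and re-computing the pathwidth and treedepth of the resulting subgraph.

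The principal obstacle is verifying the isometric condition: the unique tree-path between any two vertices of $H_i$ must realise a shortest path in $G_n$, so $H_i$ cannot contain pairs of vertices joined by a shortcut through the rest of $G_n$. Avoiding such shortcuts strongly constrains the choice of $G_n$ --- naive candidates such as the $1$-subdivision of $K_{n,n}$ or the incidence graph of a projective plane fail precisely because distinct leaves of a prospective tree are joined by short paths outside of it. I anticipate that the proof of isometry will proceed by a direct case analysis on pairs of vertices within each $H_i$, classified by their position in the tree (center, depth-$1$, depth-$2$, or depth-$3$ vertex). The lower bound $\tw(G_n) \geq n$ should then follow routinely, most likely by exhibiting a large complete or grid minor in $G_n$.
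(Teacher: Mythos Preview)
Your proposal does not yet contain a proof: it lists candidate families for $G_n$ (subdivided complete bipartite graphs, incidence graphs of designs, ``a tailor-made construction'') but, as you yourself note, the natural candidates fail the isometry test, and no working construction is given. The missing idea is the one the paper uses: rather than searching for a host graph with a built-in symmetry, start from an arbitrary wall $X$, \emph{subdivide every edge several times}, and then \emph{add} $k$ new apex vertices $a_1,\dots,a_k$, making each $a_i$ adjacent to a carefully chosen set of subdivision vertices. Each $H_i$ is the radius-$2$ tree (a subdivided star) centred at $a_i$. The subdivision is what makes isometry automatic: two vertices of $H_i$ lying on different subdivided edges are at distance at least $4$ in the subdivided wall, so the length-$\le 4$ path through $a_i$ is genuinely shortest. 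The allocation of subdivision vertices to the $H_i$ is governed by a proper colouring of the incidences of $X$, which a wall admits with $3$ colours; this yields the $k=4$ case.

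Your reduction for $k=3$ and $k=2$ by ``merging two of the four trees'' has a genuine gap: the union of two isometric subgraphs need not be isometric (a vertex of $H_1$ and a vertex of $H_2$ may be joined by a shortcut through $G_n$ lying in neither), and you would also have to argue that the union stays at pathwidth~$2$, which is false for two radius-$2$ trees sharing a leaf in general. The paper does \emph{not} merge; it gives separate constructions. For $k=2$ it again $5$-subdivides $X$ and adds only two apices $a_1,a_2$, with $a_1$ adjacent to the middle three vertices of every subdivided edge and $a_2$ adjacent to the remaining ones; the resulting $H_1,H_2$ are induced radius-$1$ neighbourhoods of $a_1,a_2$ (hence trivially isometric) and lie in $\Apex(P_3)$ and $\Apex(S_\Delta^\star)$ respectively --- note that these contain cycles and are not trees, so they could not have arisen from merging the trees of the $k=4$ case. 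The $k=3$ case is an analogous variant.
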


Given a graph $G$, a positive integer $c$, and an edge $uv \in E(G)$, the operation of \defin{subdividing} $uv$ in $G$ \defin{$c$ times} returns a graph $G'$ on the vertex set $V(G) \cup\{s_1,\dots,s_c\}$ where $s_1,\dots,s_c \notin V(G)$ and the edge set $(E(G) \setminus \{uv\}) \cup \{us_1,s_cv\} \cup \{s_is_{i+1} : i \in [c-1]\}$.
A graph $H$ is a \defin{subdivision} of a graph $G$ if $H$ can be obtained from $G$ by performing some number of subdivision operations.
For a positive integer $c$, a graph $H$ is a \defin{$c$-}subdivision of a graph $G$ if $H$ is obtained from $G$ by subdividing $c$ times each edge of $G$.
The \defin{radius} of a graph $H$ is the minimum nonnegative integer $r$ such that there exists a vertex $u$ of $H$ with $\dist_H(u,v) \leq r$ for every vertex $v$ of $H$.

The graph $G_n$ that we construct in~\cref{thm:main-intro} has treewidth at least $n$ because it contains a subdivision of a wall of order $n$.
In fact, when covering by isometric trees, we can take $G_n$ with even richer structure.

\begin{theorem}\label{thm:main-technical-subcubic}
For every connected graph $X$ of maximum degree $\Delta$, there exist trees $H_1,\dots,H_{\Delta+2}$ of radius $2$, and there exists a graph $G_X$ containing a $5$-subdivision of $X$ as an induced subgraph such that each of $H_1,\dots,H_{\Delta+2}$ is an isometric subgraph of $G_X$ and they edge cover $G_X$.
\end{theorem}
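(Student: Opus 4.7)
The plan is to build $G_X$ as the $5$-subdivision of $X$ enriched with a few ``hub'' vertices, and to realise each of the $\Delta+2$ trees as a depth-$2$ tree rooted at one such hub.

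The combinatorial input is the bipartite incidence graph $\Inc(X)$, with parts $V(X)$ and $E(X)$ and an edge joining $u$ to $e$ whenever $u$ is an endpoint of $e$. Its maximum degree is $\max(\Delta, 2)$, so by K\H{o}nig's edge-colouring theorem its edges (i.e.\ the vertex--edge incidences of $X$) partition into $r \leq \Delta + 1$ matchings $G_1, \dots, G_r$. The matching property is the core of the construction: inside one $G_i$, all incidences involve pairwise distinct vertices of $X$ and pairwise distinct edges of $X$. Now construct $G_X$ as follows. Start from the $5$-subdivision of $X$, where each edge $e = uv$ becomes a path $u - e_1^u - e_2^u - e_3 - e_2^v - e_1^v - v$, and add new hub vertices $z_1, \dots, z_r, z^*$. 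For every $(e, u) \in G_i$ add the edge $z_i - e_1^u$, and for every $e \in E(X)$ add the edge $z^* - e_3$. Since the hubs are not subdivision vertices, the induced subgraph of $G_X$ on the $5$-subdivision vertices is exactly the $5$-subdivision of $X$.

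Define the trees. For each $i \in [r]$, $H_i$ is the tree rooted at $z_i$ containing, for each $(e, u) \in G_i$, the three edges $z_i - e_1^u$, $e_1^u - u$, and $e_1^u - e_2^u$. The last tree $H_{r+1}$ is rooted at $z^*$ and, for each $e = uv \in E(X)$, contains $z^* - e_3$, $e_3 - e_2^u$, and $e_3 - e_2^v$. The matching property ensures that no $u$, $e_1^u$, or $e_2^u$ is repeated in $H_i$, so each $H_i$ is acyclic, and each tree has radius $2$ by construction. Edge coverage is immediate: the two subdivision edges adjacent to $u$ on $e$ lie in $H_i$ where $(e, u) \in G_i$, the two middle subdivision edges lie in $H_{r+1}$, and each hub edge sits in its own tree. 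If $r + 1 < \Delta + 2$, pad with duplicates to reach $\Delta + 2$ trees.

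The crux, and the main obstacle, is showing that each $H_i$ is an isometric subgraph of $G_X$. The matching property is exactly what is needed: any $G_X$-path between two vertices of $H_i$ that avoids $z_i$ must either detour through a different hub $z_j$ or through $z^*$, paying at least two edges on each end before re-entering the neighbourhood of $H_i$, or travel through the $5$-subdivision, where the relevant subdivided paths are vertex-disjoint (since $G_i$ is a matching of incidences) and therefore long. A short case analysis on the types of the two endpoints, each of which can only be $z_i$, some $e_1^u$, some $u \in V(X)$, or some $e_2^u$, confirms that the tree path is always a shortest $G_X$-path. The isometry of $H_{r+1}$ is checked identically, the only delicate case being the pair $(e_2^u, e_2^v)$ on the same edge $e$, whose $G_X$-distance $2$ via $e_3$ coincides with the tree distance.
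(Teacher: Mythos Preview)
Your proof is correct and follows essentially the same route as the paper. Your matching decomposition of the bipartite incidence graph via K\H{o}nig's theorem is precisely the paper's ``proper function'' $\varphi:\Inc(X)\to[\kappa]$ (whose colour classes are exactly such matchings, obtained there by a greedy argument), and your hub-and-tree construction on the $5$-subdivision, together with the isometry verification, matches the paper's \cref{lem:proper-to-result} almost line for line.
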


In the light of~\Cref{thm:main-technical-subcubic}, any weakening of \cref{question-trees-treewidth} in which $\tw$ is replaced by another parameter $\para{p}$ would require $\para{p}$ to remain bounded on subcubic graphs.
For example, twin-width is large on most subcubic graphs (and constant subdivisions of them) due to a counting argument~\cite{tww2}. 
Hence, there are graphs of unbounded twin-width edge covered by four isometric trees.

Note that~\cref{thm:main-intro} still leaves several cases for small values of $k$ open, among which, the one stated below, we considered the most interesting.

\begin{openquestion}
	Does there exist an absolute constant $t$ such that if a graph $G$ can be edge covered by $3$ isometric trees, then $\tw(G) \leq t$?
\end{openquestion}

\section{Detailed statements}\label{sec:detailed}

The general idea of the constructions in~\Cref{thm:main-intro} is to start with a large subdivided wall (a graph of large treewidth) and to add some vertices and edges so that it is possible to edge cover the graph by the required isometric subgraphs $H_1,\dots,H_k$.
In fact, the graphs $H_1,\dots,H_k$ that we obtain are even simpler than stated in~\cref{thm:main-intro}.
Below, preceded by several necessary definitions and notations, we give a more precise statement of our result.

For every positive integer $n$, we denote by \defin{$P_n$} the path on $n$ vertices -- we ignore isomorphism issues in this note. 
For a graph $G$, we denote by \defin{$V(G)$} its vertex set and by \defin{$E(G)$} its edge set.
A \defin{star} is a tree of radius $1$.
For a positive integer $\Delta$, we denote by \defin{$S_\Delta^\star$} the graph obtained by subdividing each edge of the star on $\Delta+1$ vertices.
Given a graph $H$, we denote by $\Apex(H)$ the class of graphs that contain a vertex whose removal results in a graph each of whose connected components is a subgraph of $H$.

For every positive integer $n$, we denote by \defin{$[n]$}, the set $\{1,\dots,n\}$.
For positive integers $n$ and $m$, the \defin{$n \times m$ grid} is the graph on vertex set $[n] \times [m]$ so that a vertex $(a,b)$ is adjacent to another vertex $(c,d)$ if and only if $(c,d) = (a+1,b)$ or $(c,d) = (a,b+1)$.
The \defin{wall} of \defin{order} $n$ is the graph obtained from the $n \times (2n+1)$ grid by removing all the edges of the form $(a,b)(a+1,b)$, where $a$ and $b$ have different parity.
See~\Cref{fig:wall-proper}.
It is well-known that for every positive integer $n$, the treewidth of the $n \times n$ grid is exactly $n$.
Moreover, treewidth is monotone under taking minors, and it is easy to check that the $n \times n$ grid is a minor of the wall of order $n$.
Finally, the subdivision operation does not decrease treewidth.
It follows that treewidth of a subdivision of the wall of order $n$ is at least $n$.

\begin{theorem}\label{thm:main-technical}
For every positive integer $n$ and for every $k \in \{2,3,4\}$, there exist connected graphs $H_1,\dots,H_k$ such that
\begin{enumerate}
	\item if $k=4$, then $H_1$, $H_2$, $H_3$, $H_4$ are trees of radius $2$; \label{item:main:k=4-t}
	\item if $k=3$, then $H_1$, $H_2$ are trees of radius $2$, and $H_3 \in \Apex(P_5)$;\label{item:main:k=3-t}
	\item if $k=2$, then $H_1 \in \Apex(P_3)$ and $H_2 \in \Apex(S_3^\star)$; \label{item:main:k=2-t}
\end{enumerate}
and there exists a graph $G_n$ containing a subdivision of a wall of order $n$ as an induced subgraph such that each of $H_1,\dots,H_k$ is an isometric subgraph of $G_n$ and $H_1,\ldots,H_k$ edge cover $G_n$.
\end{theorem}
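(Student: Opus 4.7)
The plan is to build $G_n$ from a sufficiently long subdivision $SW_n$ of the wall $W_n$ of order $n$ (each edge of $W_n$ replaced by a path of length $L$, for a constant $L$ to be tuned to $k$) by attaching a small number of apex vertices, each joined to a carefully chosen set of subdivision or wall vertices. Since every added edge is incident to an apex vertex, no edge is created between two vertices of $SW_n$, so $SW_n$ appears as an induced subgraph of $G_n$ automatically. Each subgraph $H_i$ will be built around a single apex vertex $a_i$, consisting of $a_i$ together with some of its $G_n$-neighbors and a selection of $SW_n$-edges incident to those neighbors. This fits all the required templates: a tree of radius $2$, or a graph in $\Apex(P_5)$, $\Apex(P_3)$, or $\Apex(S_3^\star)$.

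Since $W_n$ is subcubic, it admits a proper $3$-edge-coloring $E(W_n) = E_1 \cup E_2 \cup E_3$, and these matching color classes drive the placement of the apex neighborhoods. For $k = 4$, we use four apices: one per color class, each adjacent to the middle subdivision vertex of every edge of its color, forming a tree of radius $2$ covering the two central $SW_n$-edges of each subdivided color edge; a fourth apex is adjacent to every wall vertex and its tree of radius $2$ covers all wall-adjacent $SW_n$-edges. For $k = 3$, two of the trees above are merged into a single $\Apex(P_5)$-subgraph: the length-$L$ subdivided color edges contain $P_5$ components through the apex, and the remaining short near-endpoint edges can be absorbed as $P_2$-components. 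For $k = 2$, the richer structure of $\Apex(P_3)$ and $\Apex(S_3^\star)$ (carrying short disjoint paths, and small spiders respectively) enables a further consolidation of the apex subgraphs; in particular, the local three-legged structure appearing in $SW_n$ around wall vertices of degree $3$ is well adapted to the $S_3^\star$ template.

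The main obstacle is verifying simultaneous isometry of all the $H_i$. Each $H_i$ has bounded internal diameter (at most $4$ for trees of radius $2$ and a small constant in the $\Apex$-cases), so any shortcut in $G_n$ must go either through $SW_n$ (ruled out by choosing $L$ large, since $SW_n$-distances scale linearly with $L$) or through some other apex $a_j$ with $j \neq i$. The apex shortcuts are controlled by ensuring the apex neighborhoods are well separated: no apex $a_j$ is simultaneously adjacent to two vertices that are at maximal $H_i$-distance. This is arranged by letting each apex's neighborhood consist of structurally disjoint subsets of $SW_n$ (wall vertices for one apex, middle subdivisions of a fixed color for others). The remaining verification reduces to a finite case analysis over the possible relative positions in $SW_n$ of two vertices of $H_i$ (wall endpoint, near-endpoint subdivision, middle subdivision), which is mechanical once $L$ and the apex neighborhoods are fixed for each $k$.
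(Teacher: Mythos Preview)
Your overall architecture (subdivide the wall, attach one apex per $H_i$, let each $H_i$ be the radius-$2$ tree or $\Apex$-graph around its apex) matches the paper's, but the specific $k=4$ layout you propose cannot be simultaneously isometric and edge-covering. Look at your fourth tree $H_4$, whose apex $a_4$ is adjacent to every wall vertex. For an edge $uv\in E(W_n)$ subdivided as $u\,s_1\cdots s_{L-1}\,v$, both $s_1$ and $s_{L-1}$ are leaves of $H_4$ with $\dist_{H_4}(s_1,s_{L-1})=4$ (via $u\text{--}a_4\text{--}v$), while $\dist_{SW_n}(s_1,s_{L-1})=L-2$; isometry of $H_4$ therefore forces $L\ge 6$. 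But your cover only assigns four of the $L$ edges of this path to some $H_i$ (the two wall-adjacent edges to $H_4$, the two central edges to the colour tree), so for $L\ge 6$ at least two edges per subdivided wall-edge are left uncovered. Your remark that $SW_n$-shortcuts are ``ruled out by choosing $L$ large'' is exactly what destroys the edge cover---the two requirements pull in opposite directions. Since your $k=3$ plan is to merge two of these trees, the gap propagates there as well.

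The paper resolves this by replacing the proper $3$-edge-colouring of $W_n$ with a proper colouring $\varphi\colon\Inc(W_n)\to[3]$ of \emph{incidences} (so $\varphi(u,uv)\ne\varphi(v,uv)$ and $\varphi(u,uv)\ne\varphi(u,uw)$ for $v\ne w$). With $L=6$ the roles are swapped relative to yours: the fourth apex attaches at the \emph{middle} $s_3$ of every subdivided edge (covering $s_2,s_3,s_4$), while apex $a_i$ for $i\in[3]$ attaches at $s_1$ whenever $\varphi(u,uv)=i$ (covering $u,s_1,s_2$) and symmetrically at $s_5$. No apex is adjacent to any wall vertex, and properness of $\varphi$ guarantees that the two near-endpoint pieces of any single subdivided edge land in \emph{different} colour trees---this is precisely what eliminates the close-leaf obstruction above while still covering all six edges.
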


It is easy to verify that~\Cref{thm:main-technical} implies~\Cref{thm:main-intro}.
To this end, let us now recall the definitions of the graph parameters of our interest.

Let $G$ be a graph.
A \defin{tree decomposition} of $G$ is a pair $\mathcal{W} = (T,(W_x \mid x \in V(T)))$ where $T$ is a tree and $W_x \subseteq V(G)$ for every $x \in V(T)$ satisfying the following conditions:
\begin{enumerate}
    \item for every $u \in V(G)$, $T[\{x \in V(T) \mid u \in W_x\}]$ is a connected subgraph of $T$, and
    \item for every edge $uv \in E(G)$, there exists $x \in V(T)$ such that $u,v \in W_x$.
\end{enumerate}
The \defin{width} of $\mathcal{W}$ is $\max_{x \in V(T)} |W_x|-1$, and the \defin{treewidth} of $G$ is the minimum width of a tree decomposition of $G$.
A tree decomposition $(T,(W_x \mid x \in V(T)))$ of $G$ is a \defin{path decomposition} of $G$ if $T$ is a path.
The \defin{pathwidth} of $G$ is the minimum width of a path decomposition of $G$.
The definition of \defin{treedepth} of $G$ is recursive:
\begin{enumerate}
    \item if $G$ has no vertices, then $\td(G) = 0$, 
    \item if $G$ is not connected, then $\td(G) = \max_{C \in \calC} \td(C)$ where $\calC$ is the set of components of~$G$,
    \item if $G$ is connected, then $\td(G) = \min_{v \in V(G)} \td(G-v) + 1$.
\end{enumerate}

Similarly as~\cref{thm:main-technical-subcubic} is a strengthening of~\Cref{thm:main-intro} under condition~\ref{item:main:k=4} (covering by isometric trees), we obtain a strengthening of~\Cref{thm:main-intro} under condition~\ref{item:main:k=2}.

\begin{theorem}\label{thm:main-technical-any}
For every connected graph $X$ of maximum degree $\Delta$ there exist $H_1 \in \Apex(P_3)$ and $H_2 \in \Apex(S_\Delta^\star)$ (in particular, $\td(H_1) \leq 3$ and $\td(H_2) \leq 4$), and there exists a graph $G_X$ containing a subdivision of $X$ as an induced subgraph such that each of $H_1$ and $H_2$ is an isometric subgraph of $G_X$ and they edge cover $G_X$.
\end{theorem}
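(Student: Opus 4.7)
The plan is to construct $G_X$ as the $3$-subdivision of $X$ enriched with two carefully wired apex vertices. We first fix an arbitrary orientation $e=(u_e,v_e)$ of each edge $e\in E(X)$ and introduce three new subdivision vertices $x_1^e,x_2^e,x_3^e$, so that $e$ becomes the path $u_e{-}x_1^e{-}x_2^e{-}x_3^e{-}v_e$ of length $4$. We then add two new apex vertices $a_1$ and $a_2$. The apex $a_1$ is joined to $x_1^e$ and $x_3^e$ for every $e$; the apex $a_2$ is joined to every vertex of $V(X)$ as well as to every $x_1^e$ and every $x_3^e$. Crucially, neither apex is adjacent to any middle vertex $x_2^e$. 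We then define $H_1$ to consist of $a_1$ together with all $a_1$-edges and all ``middle'' subdivision edges $x_1^e x_2^e$ and $x_2^e x_3^e$; and $H_2$ to consist of $a_2$ together with all $a_2$-edges and the ``end'' subdivision edges $u_e x_1^e$ and $x_3^e v_e$.

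The structural conclusions follow by inspection. The apices are the only source of non-subdivision edges in $G_X$, so the induced subgraph of $G_X$ on its non-apex vertices equals the $3$-subdivision of $X$. Removing $a_1$ from $H_1$ leaves a disjoint union of the paths $x_1^e{-}x_2^e{-}x_3^e$ for $e\in E(X)$, yielding $H_1\in\Apex(P_3)$. Removing $a_2$ from $H_2$ leaves, at each $v\in V(X)$, a star whose size equals the degree of $v$ in $X$; since this degree is at most $\Delta$, the star embeds into $S_\Delta^\star$, yielding $H_2\in\Apex(S_\Delta^\star)$. Every edge of $G_X$ lies in $H_1\cup H_2$ by construction, and both $H_1$ and $H_2$ are connected via their apex.

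The main task is to verify that $H_1$ and $H_2$ are isometric in $G_X$. For $H_2$, every vertex of $V(H_2)\setminus\{a_2\}$ is adjacent to $a_2$, so $H_2$ has diameter $2$; the $G_X$-edges absent from $H_2$ are exactly the middle subdivision edges (each incident to $x_2^e\notin V(H_2)$) and the $a_1$-edges (each incident to $a_1\notin V(H_2)$), so none of them shortcuts a pair in $V(H_2)$, and thus $\dist_{G_X}=\dist_{H_2}$ on $V(H_2)$. For $H_1$, the distances range up to $4$, and the step we expect to be the main obstacle is the middle--middle case: for $e\neq f$, the two middles $x_2^e$ and $x_2^f$ are at $H_1$-distance exactly $4$ through $a_1$, so we must show $\dist_{G_X}(x_2^e,x_2^f)\geq 4$. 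This is the very reason for leaving the middles disconnected from both apices: the only $G_X$-neighbors of $x_2^e$ are the two outer vertices $x_1^e$ and $x_3^e$, and no two outer vertices on different edges are adjacent in $G_X$, so any $G_X$-path from $x_2^e$ to $x_2^f$ must step through an outer vertex of $e$, then cross to an outer vertex of $f$ via either an apex or a shared endpoint in $V(X)$, and only then reach $x_2^f$---forcing length at least $4$. The remaining cross-component pairs (outer-outer at $H_1$-distance $2$ and outer-middle at distance $3$) match $\dist_{G_X}$ through the analogous apex-shortcut comparison.
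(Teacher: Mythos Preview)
Your construction is correct and close in spirit to the paper's, but with a genuine twist worth noting. The paper takes the $5$-subdivision of $X$ (so each edge becomes $u\,s_1 s_2 s_3 s_4 s_5\,v$), sets $V_1=\{s_2,s_3,s_4\}$ and $V_2=\{u,s_1,s_2,s_4,s_5,v\}$ per edge, and makes the apex $a_i$ adjacent to \emph{every} vertex of $V_i$. Then $H_i=G[V_i\cup\{a_i\}]$ is an induced subgraph of radius~$1$, and isometry of both $H_1$ and $H_2$ is dispatched by the one-line Observation~\ref{obs:isometric-subgraphs} (an induced subgraph of radius~$1$ is automatically isometric). The components of $H_2-a_2$ come out as genuine subdivided stars $S_d^\star$.

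You instead get away with only a $3$-subdivision by deliberately \emph{not} wiring $a_1$ to the middle vertices $x_2^e$. The price is that $H_1$ now has radius~$2$, so the blanket observation no longer applies and you need the short case analysis culminating in the middle--middle check $\dist_{G_X}(x_2^e,x_2^f)\ge 4$; your argument for this is sound. In exchange, your $G_X$ is smaller, and the components of $H_2-a_2$ are plain stars rather than subdivided ones (still subgraphs of $S_\Delta^\star$, so the $\Apex(S_\Delta^\star)$ conclusion is unaffected). In short: the paper spends two extra subdivision vertices per edge to make the isometry proof a triviality for both covering pieces, while you economise on the subdivision at the cost of one honest distance verification.
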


\section{Covering by trees}\label{sec:k=4}
In this section, we prove that quite complicated graphs can be isometrically covered by few trees.
Namely, we prove~\cref{thm:main-technical} under condition~\ref{item:main:k=4} and we prove~\cref{thm:main-technical-subcubic}.

Let $X$ be a graph.
The \defin{set of incidences} of $X$ is the set
\[\mathrm{\defin{$\Inc(X)$}} := \{(u,e) : u \in V(X), e \in E(X), u \in e\}.\]
Let $\kappa$ be a positive integer.
We say that a function $\varphi : \Inc(X) \rightarrow [\kappa]$ is \defin{proper} if for all $uv \in E(X)$, we have $\varphi(u,uv) \neq \varphi(v,uv)$, and for all $u \in V(X)$ if $uv$ and $uw$ are distinct edges of $G$, then $\varphi(u,uv) \neq \varphi(u,uw)$.

It is clear that by a greedy procedure, for every graph $X$ with maximum degree $\Delta$, we can find a proper function $\varphi : \Inc(X) \rightarrow [\Delta+1]$. Indeed, each element $(u,uv) \in \Inc(X)$ has at most $\Delta-1$ color constraints from the set $\{(u,e): e \in E(X)\setminus \{uv\}, u \in e\}$, and at most one constraint from $(v,uv)$, therefore $\Delta-1+1+1 = \Delta+1$ colors are always enough.
A wall $X$ has maximum degree at most $3$, and thus admits a proper function $\varphi : \Inc(X) \rightarrow [4]$. 
In fact, the symmetric structure of walls implies that every wall $X$ admits a proper function $\varphi : \Inc(X) \rightarrow [3]$: one can just extend the construction shown in~\Cref{fig:wall-proper}.
These observations and the next lemma yield the advertised statements.

\begin{figure}[!tp]
  \centering
  \includegraphics{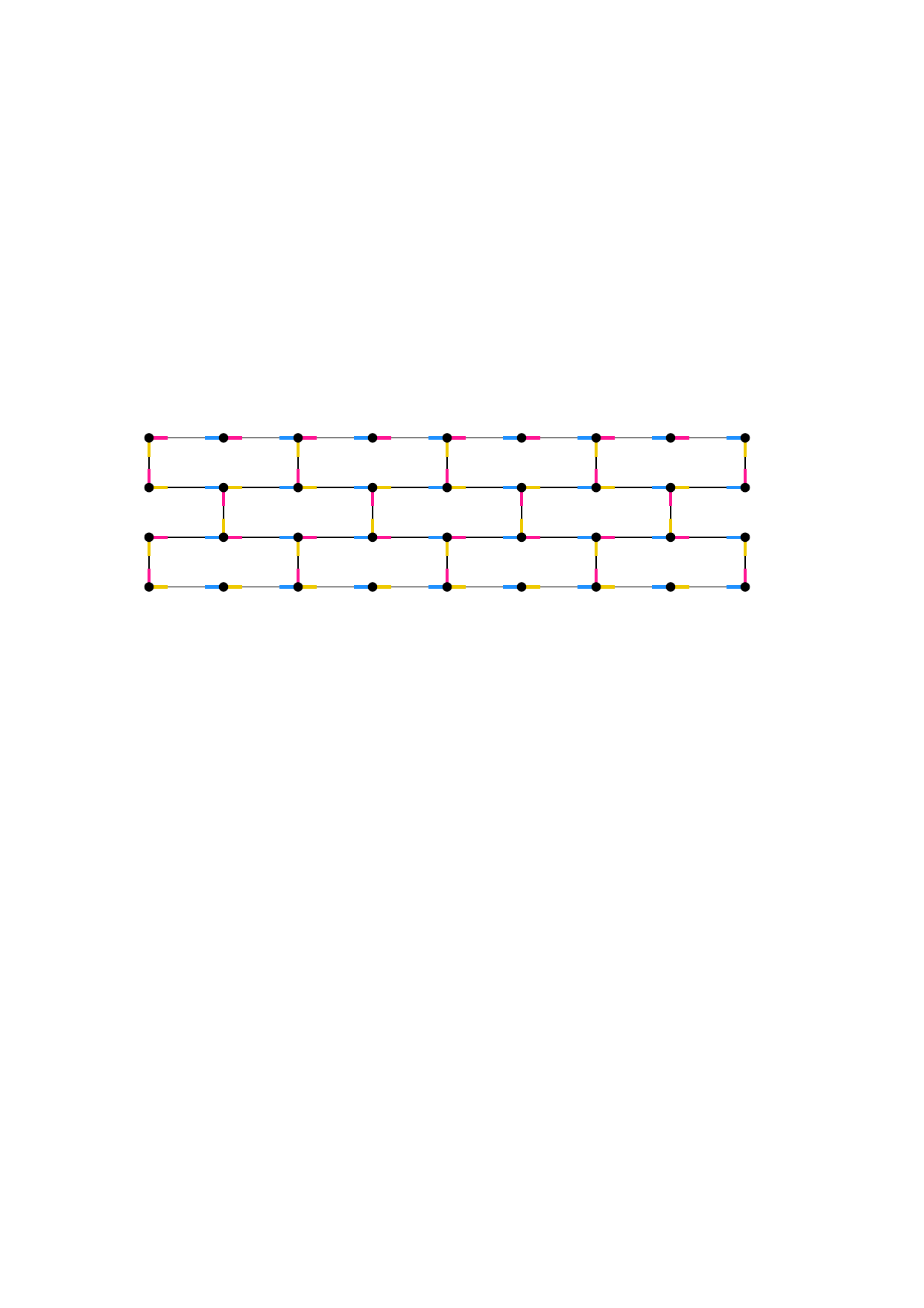}
  \caption{
        A wall $X$ of order $4$ along with a proper function mapping $\Inc(X)$ to three colors: blue, pink, and yellow.
    }
   \label{fig:wall-proper}
\end{figure}

\begin{lemma}\label{lem:proper-to-result}
	Let $X$ be a connected graph and let $\kappa$ be a positive integer.
	If there exists a proper $\varphi : \Inc(X) \rightarrow [\kappa]$, then there exists a graph $G$ that contains a subdivision of $X$ as an induced subgraph and there exist isometric subgraphs $H_1,\dots,H_{\kappa+1}$ of $G$ edge covering $G$ such that $H_i$ is a tree of radius $2$ for each $i \in [\kappa+1]$.
\end{lemma}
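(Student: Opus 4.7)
The plan is to take $G$ to be the $3$-subdivision of $X$ --- each edge $e = uv$ replaced by the path $u - \sigma(u,e) - m_e - \sigma(v,e) - v$ with three fresh internal vertices --- together with $\kappa+1$ additional \emph{apex} vertices $x_1, \ldots, x_{\kappa+1}$. For every incidence $(u,e) \in \Inc(X)$ with $\varphi(u,e) = c$, add the edge $x_c u$; and for every $e \in E(X)$, add the edge $x_{\kappa+1} m_e$. Since no newly added edge lies inside the subdivision, the $3$-subdivision of $X$ sits as an induced subgraph of $G$.

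For each $c \in [\kappa]$, let $H_c$ be the subgraph whose edges are the $x_c u$ together with $u \sigma(u,e)$ for incidences $(u,e)$ with $\varphi(u,e) = c$; properness of $\varphi$ at each $u$ implies at most one such incidence per $u$, so $H_c$ is a tree of radius $2$ centered at $x_c$. Let $H_{\kappa+1}$ be the subgraph whose edges are the $x_{\kappa+1} m_e$ together with $m_e \sigma(u,e)$ and $m_e \sigma(v,e)$ for each $e = uv$; this is a tree of radius $2$ centered at $x_{\kappa+1}$. Edge coverage follows: the two extreme edges of each subdivided path belong to the color trees of its two endpoint incidences, the two middle edges belong to $H_{\kappa+1}$, and the apex edges belong to their defining tree by construction.

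The main work is isometry. For each $H_i$ and each pair $p, q \in V(H_i)$, I would compare $\dist_G(p,q)$ to $\dist_{H_i}(p,q)$ by case analysis on the types of $p, q$ (center, depth-$1$, leaf). Nearly all cases are immediate from the fact that apex vertices have small, prescribed neighborhoods and that $V(X)$-vertices (resp.\ middle vertices) are pairwise non-adjacent in $G$; the only delicate case is when both $p, q$ are leaves, where $\dist_{H_i}(p,q) = 4$ and one must rule out shorter $G$-paths. For $H_c$ with $c \leq \kappa$, the leaves are $\sigma(u,e)$ and $\sigma(u',e')$ with $\varphi(u,e) = \varphi(u',e') = c$; here properness is used twice: the vertex condition gives $u \neq u'$, and the edge condition gives $e \neq e'$. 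Hence the $G$-neighborhoods $\{u, m_e\}$ and $\{u', m_{e'}\}$ of these two leaves are disjoint, and no edge of $G$ connects the two sets, excluding paths of length $\leq 3$. For $H_{\kappa+1}$ the analogous leaves are $\sigma(u,e), \sigma(u',e')$ with $e \neq e'$ given directly, and the same argument applies.
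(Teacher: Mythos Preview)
The construction has a real problem in the isometry of $H_{\kappa+1}$. Take any vertex $u\in V(X)$ of degree at least $2$ in $X$, and let $e,e'$ be two distinct edges incident to $u$. Then $\sigma(u,e)$ and $\sigma(u,e')$ are both leaves of $H_{\kappa+1}$, sitting under different depth-$1$ vertices $m_e\neq m_{e'}$, so $\dist_{H_{\kappa+1}}\!\bigl(\sigma(u,e),\sigma(u,e')\bigr)=4$. But in $G$ these two leaves share the common neighbour $u$, hence $\dist_G\!\bigl(\sigma(u,e),\sigma(u,e')\bigr)=2$. Your sentence ``the same argument applies'' hides exactly this failure: for $H_c$ with $c\le\kappa$, properness of $\varphi$ is what forces $u\neq u'$, but for $H_{\kappa+1}$ there is no colouring constraint on the leaves, and $e\neq e'$ alone does \emph{not} prevent $u=u'$. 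So the neighbourhoods $\{u,m_e\}$ and $\{u',m_{e'}\}$ need not be disjoint.

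The paper avoids this by using a $5$-subdivision (path $u\,s_1\,s_2\,s_3\,s_4\,s_5\,v$) with the extra apex $a_{\kappa+1}$ attached to the central vertex $s_3$; the leaves of $H_{\kappa+1}$ are then the $s_2$- and $s_4$-type vertices, whose only $G$-neighbours are other subdivision vertices of the \emph{same} edge. That extra layer of separation from $V(X)$ is precisely what blocks the shortcut through a shared original vertex. With only a $3$-subdivision the outer subdivision vertices must serve simultaneously as leaves of the colour trees and as leaves of $H_{\kappa+1}$, and their adjacency to $V(X)$ makes the second role impossible. Your verification for $H_c$ with $c\le\kappa$ is correct; to fix the proof you need a longer subdivision so that the leaves of $H_{\kappa+1}$ have $G$-neighbourhoods disjoint from $V(X)$, which is what the paper's $5$-subdivision accomplishes.
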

\begin{proof}
	See~\Cref{fig:wall}.
	Let $\varphi : \Inc(X) \rightarrow [\kappa]$ be proper.
	Let $G'$ be obtained from $X$ by subdividing each edge five times and adding isolated vertices $a_1,\dots,a_{\kappa+1}$.
	We will construct $G$ by adding some edges to $G'$ as described below.
	In parallel, we will construct the graphs $H_1,\dots,H_{\kappa+1}$. 
    Note that these graphs will be induced subgraphs of $G$, hence, we will only specify $V(H_i)$ for each $i \in [\kappa+1]$.
	We initiate by adding $a_i$ in $V(H_i)$ for every $i \in [\kappa+1]$.
	The next step is performed for every edge $uv \in E(X)$ independently.
	Let $uv \in E(X)$ and let $s_1,s_2,s_3,s_4,s_5 \in V(G') \setminus V(X)$ be the unique sequence of vertices such that $us_1s_2s_3s_4s_5v$ is a path in $G'$. 
	First, we add edges $a_{\varphi(u,uv)}s_1$, $a_{\kappa+1}s_3$, and $a_{\varphi(v,uv)}s_5$ to $G$. Next, we cover $u,v,s_1,\ldots,s_5$ by $H_1,\ldots,H_{\kappa+1}$:
    \begin{itemize}
        \item Add vertices $u,s_1,s_2$ to $H_{\varphi(u,uv)}$.
        \item Add vertices $s_2,s_3,s_4$ to $H_{\kappa+1}$.
        \item Add vertices $s_4,s_5,v$ to $H_{\varphi(v,uv)}$.
    \end{itemize}
	This completes the construction of $G$ and $H_1,\dots,H_{\kappa+1}$.
	
	By construction, $G$ contains a subdivision of $X$ as an induced subgraph.
	  It is also direct that $H_1,\dots,H_{\kappa+1}$ edge cover $G$ and that $H_i$ is a tree of radius $2$ for each $i \in [\kappa+1]$.
	To complete the proof, it suffices to verify that $H_i$ is an isometric subgraph of $G$ for every $i \in [\kappa+1]$.
	Let $i \in [\kappa+1]$ and let $x$ and $y$ be two distinct vertices of $H_i$.
    Since $H_i$ is a tree of radius $2$, we have $\dist_{H_i}(x,y) \leq 4$. If $x,y$ lie on the same subdivided edge of $G$ then $\dist_{H_i}(x,y) \leq 2$ and $\dist_G(x,y) = \dist_{H_i}(x,y)$. Otherwise, $x,y$ lie on different subdivided edges, and the fact that $\varphi$ is proper ensures $\dist_{G'}(u,v) \geq 4$, going over all paths starting from $x$ of length at most $4$ in $G$ it is easy to verify that $\dist_G(x,y) = \dist_{H_i}(x,y) \leq 4$.
	This completes the proof of the lemma.
\end{proof}

Note that~\cref{lem:proper-to-result} applied to a wall $X$ with a proper function $\varphi : \Inc(X) \rightarrow [3]$ implies \cref{thm:main-technical} under condition~\ref{item:main:k=4-t} (and therefore \cref{thm:main-intro} under condition~\ref{item:main:k=4}). Applying \cref{lem:proper-to-result} to any graph $X$ of maximum degree $\Delta$ and any greedy proper function $\varphi : \Inc(X) \to [\Delta+1]$ implies \cref{thm:main-technical-subcubic}.

\begin{figure}[!tp]
  \centering
  \includegraphics{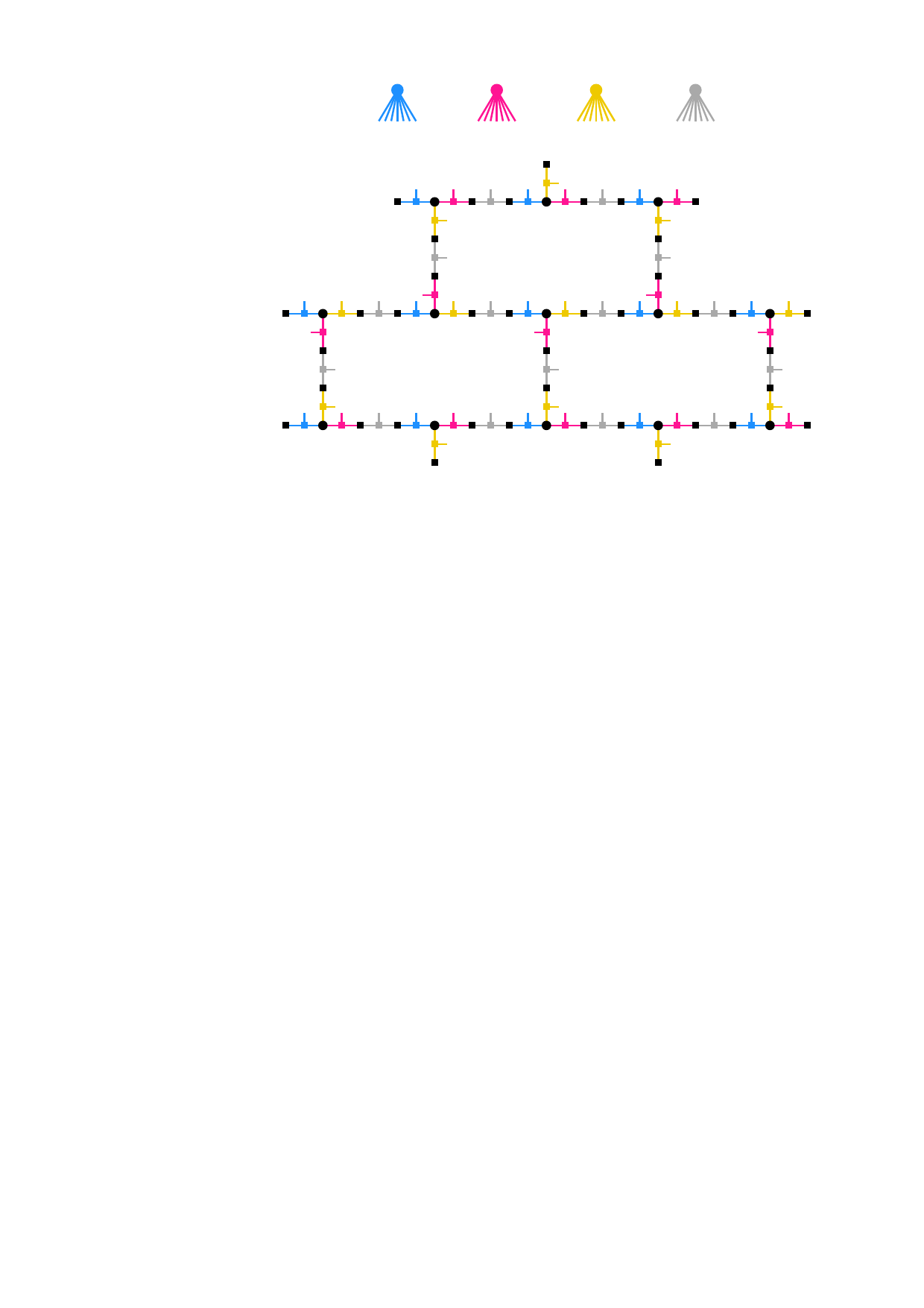}
  \caption{
        An illustration of the construction in the proof of~\Cref{lem:proper-to-result}.
        Vertices that are results of subdividing edges of the original wall are marked as squares.
        Vertices belonging to exactly one subgraph $H_i$ are color appropriately.
        Note that we used the function $\varphi$ as in~\Cref{fig:wall-proper}.
        Vertices $A = \{a_1,a_2,a_3,a_4\}$ are on the top of the figure.
        For clarity, we do not draw edges incident to vertices in $A$, instead, we only draw their beginnings and ends.
    }
   \label{fig:wall}
\end{figure}

\section{Covering by three simple graphs}\label{sec:k=3}

In this section, we prove~\cref{thm:main-technical} with~\ref{item:main:k=3-t} assumed.
The construction is similar to the one given in~\Cref{sec:k=4}.
Again, we start with a wall $X$, subdivide each edge several times (\q{horizontal} edges five times and \q{vertical} edges seven times), and add three isolated vertices.
Next, we add some edges and distribute them into the subgraphs $H_1$, $H_2$, and $H_3$.
The construction is presented in~\Cref{fig:wall-3-colors}.
It is easy to extend the construction to a wall of any order, and to verify that it satisfies the required properties (similarly as in~\Cref{sec:k=4}).

\begin{figure}[!tp]
  \centering
  \includegraphics{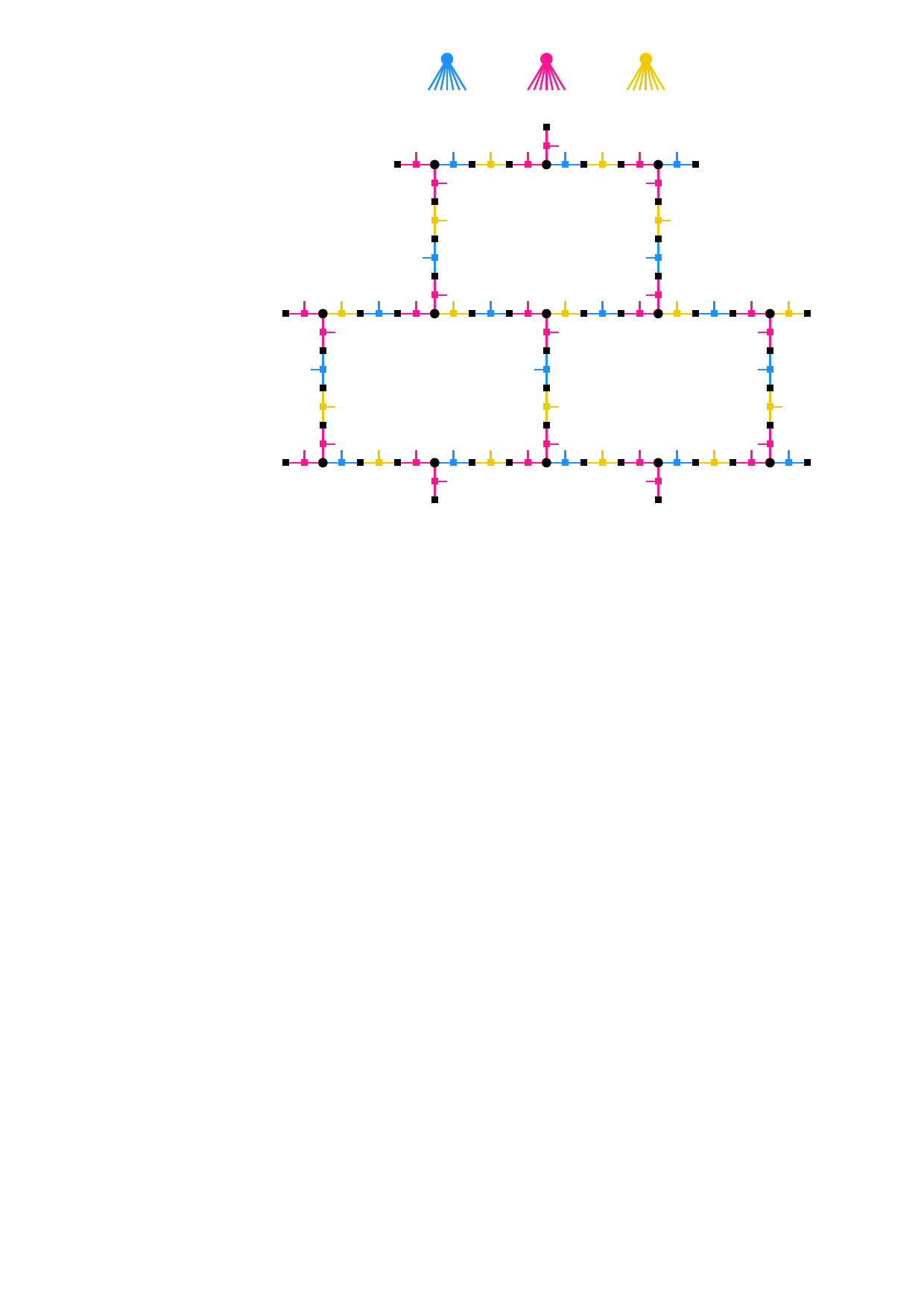}
  \caption{
        We use the same conventions as in~\Cref{fig:wall}.
        Note that the graphs $H_i$ corresponding to colors blue and yellow are trees of radius $2$, and the graph corresponding to color pink is in $\Apex(P_5)$.
    }
   \label{fig:wall-3-colors}
\end{figure}

\section{Covering by two simple graphs}\label{sec:k=2}
In this section, we prove that quite complicated graphs can be isometrically covered by two graphs of small treedepth.
Namely, we prove~\cref{thm:main-technical} under condition~\ref{item:main:k=2-t} and we prove~\cref{thm:main-technical-any}.
Note that since every wall is subcubic, the latter implies the former.
The key is the following straightforward observation.

Given a graph $G'$ and $P=(V_1,V_2)$ with $V_1,V_2 \subseteq V(G')$, let $G$ be constructed from $G'$ by adding two new vertices $a_1$ and $a_2$ so that $a_i$ is adjacent to all vertices in $V_i$ for each $i \in [2]$. We also define $H_i = G[V_i \cup \{a_i\}]$ for each $i \in [2]$. We say that $(G,H_1,H_2)$ is \defin{built} from $(G',P)$.

\begin{obs}\label{obs:isometric-subgraphs}
Let $G'$ be a graph and let $P = (V_1,V_2)$. Let $(G,H_1,H_2)$ be built from $(G',P)$.
The graphs $H_1$ and $H_2$ are isometric subgraphs of $G$.
\end{obs}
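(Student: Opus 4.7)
The plan is to exploit the very simple structure of the added vertices $a_1$ and $a_2$, together with the fact that $H_i$ is defined as the \emph{induced} subgraph $G[V_i \cup \{a_i\}]$. First I would note that $H_i$ is connected: in $G$ every vertex of $V_i$ is adjacent to $a_i$, and since $H_i$ is induced, all these edges survive in $H_i$; hence $H_i$ contains, as a spanning subgraph, the star with center $a_i$ and leaves $V_i$.

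Next I would fix $i \in [2]$ and two distinct vertices $x,y \in V(H_i)$ and show that $\dist_G(x,y) = \dist_{H_i}(x,y)$. The inequality $\dist_G(x,y) \leq \dist_{H_i}(x,y)$ is immediate since $H_i$ is a subgraph of $G$. For the reverse I split into cases. If one of $x,y$ equals $a_i$, say $y = a_i$, then $x \in V_i$, so $xy \in E(H_i)$ and both distances equal $1$. If $x,y \in V_i$ and $xy \in E(G')$, then $xy$ belongs to both $E(G)$ and $E(H_i)$ (the latter because $H_i$ is induced), so again both distances equal $1$. Finally, if $x,y \in V_i$ and $xy \notin E(G')$, then $xy \notin E(G)$ either, because the only edges added to $G'$ when building $G$ are incident to $a_1$ or $a_2$; hence $\dist_G(x,y) \geq 2$, while the path $x\,a_i\,y$ witnesses $\dist_{H_i}(x,y) \leq 2$. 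In every case the two distances coincide.

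There is no real obstacle here: the statement is essentially a tautology once one notes two things. First, $H_i$ is an induced subgraph, so adjacencies among vertices of $V_i$ are preserved in $H_i$ rather than being lost. Second, $a_i$ has exactly the same neighborhood in $G$ and in $H_i$, namely $V_i$, so any shortest path through $a_i$ exists identically in both graphs. These two observations force all three possible distance values ($0$, $1$, or $2$) to agree.
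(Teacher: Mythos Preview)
Your proof is correct and follows essentially the same approach as the paper: both arguments hinge on the facts that $H_i$ is an \emph{induced} subgraph of $G$ (so non-adjacency is preserved) and that $H_i$ has radius $1$ via the star centered at $a_i$ (so $\dist_{H_i}(x,y)\le 2$). The paper simply packages your case analysis into the single observation that if $x,y$ are nonadjacent in $H_i$ then they are nonadjacent in $G$, but the content is identical.
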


\begin{proof}
    Let $i \in [2]$ and let $x,y \in V(H_i)$.
    From the construction, $H_i$ has radius $1$, and so, $\dist_{H_i}(x,y) \in \{1,2\}$.
    Since $H_i$ is an induced subgraph of $G$, if $x$ and $y$ are nonadjacent in $H_i$, then they are nonadjacent in $G$.
    This completes the proof of the observation.
\end{proof}

\begin{proof}[Proof of~\Cref{thm:main-technical-any}]
	The construction is very similar to the one in~\Cref{lem:proper-to-result}.
	Let $G'$ be obtained from $X$ by subdividing each edge five times.
	We construct $P = (V_1,V_2)$.
	The next step is performed for every edge $uv \in E(X)$ independently.
	Let $uv \in E(X)$ and let $s_1,s_2,s_3,s_4,s_5 \in V(G') \setminus V(X)$ be such that $us_1s_2s_3s_4s_5v$ is a path in $G'$.
	We set $\{u,s_1,s_2,s_4,s_5,v\} \subseteq V_2$ and $\{s_2,s_3,s_4\} \subseteq V_1$. 
	Let $(G,H_1,H_2)$ be built from $P$.
    \Cref{obs:isometric-subgraphs} implies that $H_1$ and $H_2$ are isometric subgraphs of $G$.
	By construction, $H_1$ and $H_2$ edge cover $G$.
	Also by construction, $G$ contains a subdivision of $X$ as an induced subgraph.
	Finally, note that every component of $G[V_1]$ is a path on three vertices and every component of $G[V_2]$ is an $S_d^\star$ for some positive integer $d$ at most the maximum degree of $X$.
	Since $H_i = G[V_i \cup \{a_i\}]$ for each $i \in [2]$, it follows that $H_1 \in \Apex(P_3)$ and $H_2 \in \Apex(S_\Delta^\star)$, as desired.
\end{proof}

\section{Acknowledgments}
The research was conducted during the 2025 Sino-French International Conference on Graph Theory, Combinatorics, and Algorithms, hosted by Shandong University in Jinan, China.
We are grateful to the organizers and participants for creating a friendly and stimulating environment.

\bibliographystyle{abbrv}
\bibliography{bibs}

@article{Dumas,
author = {Dumas, Ma\"{e}l and Foucaud, Florent and Perez, Anthony and Todinca, Ioan},
title = {On Graphs Coverable by \({k}\) Shortest Paths},
journal = {SIAM Journal on Discrete Mathematics},
volume = {38},
number = {2},
pages = {1840-1862},
year = {2024},
doi = {10.1137/23M1564511},
URL = {https://doi.org/10.1137/23M1564511},
Note = {\href{http://arxiv.org/abs/2206.15088}{arXiv:2206.15088}}
}

@misc{Baste,
      title={A polynomial bound on the pathwidth of graphs edge-coverable by $k$ shortest paths}, 
      author={Julien Baste and Lucas De Meyer and Ugo Giocanti and Etienne Objois and Timothé Picavet},
      year={2025},
      eprint={2510.02901},
      archivePrefix={arXiv},
      primaryClass={math.CO},
      url={https://arxiv.org/abs/2510.02901}, 
      note = {\href{http://arxiv.org/abs/2510.02901}{arXiv:2510.02901}}
}

@inproceedings{tww2,
author = {Bonnet, \'{E}douard and Geniet, Colin and Kim, Eun Jung and Thomass\'{e}, St\'{e}phan and Watrigant, R\'{e}mi},
title = {Twin-width {II}: small classes},
year = {2021},
isbn = {9781611976465},
publisher = {Society for Industrial and Applied Mathematics},
address = {USA},
booktitle = {Proceedings of the Thirty-Second Annual ACM-SIAM Symposium on Discrete Algorithms},
pages = {1977–1996},
numpages = {20},
location = {Virtual Event, Virginia},
series = {SODA '21},
      note = {\href{http://arxiv.org/abs/2006.09877}{arXiv:2006.09877}}
}
\end{document}